\newtheorem{theorem}{Theorem}
\newcommand{\STS@text}[1]{{\rm STS}$(#1)$}
\newcommand{\STS@unstarred}[1]{\ifmmode{\text{\mbox{\STS@text{#1}}}}\else{\mbox{\STS@text{#1}}}\fi}
\newcommand{\STS@starred}[1]{\ifmmode{\text{\mbox{sub-\STS@text{#1}}}}\else{\mbox{sub-\STS@text{#1}}}\fi}
\newcommand{\STSS@unstarred}[1]{\ifmmode{\text{\mbox{\STS@text{#1}s}}}\else{\mbox{\STS@text{#1}s}}\fi}
\newcommand{\STSS@starred}[1]{\ifmmode{\text{\mbox{sub-\STS@text{#1}s}}}\else{\mbox{sub-\STS@text{#1}s}}\fi}
\DeclareRobustCommand{\STS}{\@ifstar{\STS@starred}{\STS@unstarred}}
\DeclareRobustCommand{\STSS}{\@ifstar{\STSS@starred}{\STSS@unstarred}}
\newcommand{\NDG}{\ensuremath{\mathrm{N}_{\mathcal{D}}(G)}}
\newcommand{\NFGW}{\ensuremath{\mathrm{N}_{\mathcal{F}}(G,\mathcal{W})}}
\newcommand{\tDG}{\ensuremath{t_{\mathcal{D}}(G)}}
\newcommand{\tFGW}{\ensuremath{t_{\mathcal{F}}(G,\mathcal{W})}}
\begin{document}

\title{Enumerating Steiner Triple Systems}

\author{
Daniel Heinlein\thanks{Supported by the Academy of Finland, Grant 331044.}\phantom{ }
and Patric R. J. \"Osterg\aa rd\\
Department of Information and Communications Engineering\\
Aalto University School of Electrical Engineering\\
P.O.\ Box 15400, 00076 Aalto, Finland\\
\tt \{daniel.heinlein,patric.ostergard\}@aalto.fi
}

\date{}

\maketitle

\begin{abstract}
  Steiner triple systems (STSs) have been classified up to order 19. Earlier estimations
  of the number of isomorphism classes of STSs of order 21, the smallest open case,
  are discouraging as for classification, so it is natural to focus on the easier problem
  of merely counting the isomorphism classes. Computational approaches
  for counting STSs are here considered and lead to an algorithm that is used to
  obtain the number of isomorphism classes for order 21: $14{,}796{,}207{,}517{,}873{,}771$.
\end{abstract}

\noindent
    {\bf Keywords:} classification, counting, regular graph, Steiner triple system

\noindent
    {\bf MSC:} 05B07

\section{Introduction}

A \emph{Steiner triple system} (STS) is a pair $(V,\mathcal{B})$,
where $V$ is a set of \emph{points} and $\mathcal{B}$ is a
set of \mbox{3-subsets} of points, called \emph{blocks}, such that
every \mbox{2-subset} of points occurs in exactly one block. The size of
the point set is the \emph{order} of the STS, and an
STS of order $v$ is denoted by \STS{v}. An \STS{v} exists if and only if
\begin{equation}\label{eq:sts}
v \equiv 1\text{ or }3\!\!\!\pmod{6}.
\end{equation}
For more information about Steiner triple systems, see~\cite{C,CR}.

An \STS{v} is \emph{isomorphic} to another \STS{v} if
there exists a bijection between the point sets that maps blocks
onto blocks; such a bijection is an \emph{isomorphism}.
An isomorphism of a Steiner triple system onto itself
is an \emph{automorphism} of the STS.
The automorphisms of an STS form a group under composition, the
\emph{automorphism group} of the Steiner triple system.

Steiner triple systems have been classified up to order 19. The numbers
of isomorphism classes are 1, 1, 1, 2, 80, and 11,084,874,829
for the admissible orders 3, 7, 9, 13, 15, and
19, respectively; see~\cite{KO04,KO06} for details about
classification of Steiner triple systems and~\cite{G} for historical
remarks and speculations about future results.
Indeed, for all these orders, representatives of
the isomorphism classes have been determined. For the smallest open
case of order 21, however, such an approach does not seem feasible
at the moment as the number of isomorphism classes has been estimated~\cite{HO}
to be greater than $10^{16}$. There are numerous studies on classification
of subclasses of Steiner triple systems of order 21
\cite{CCIL,GSK,KT,K,K05,KOTZ,KO20,KO21,LM,MPR,MR,T87,T04}.

If an instance of classifying combinatorial structures with given
parameters is infeasible with the available resources,
it might still be possible to \emph{count} the isomorphism classes.
There are several examples of such studies in the past, for
instance, for Latin squares~\cite{HKO,MMM}, one-factorizations of
complete graphs~\cite{KO09}, and Steiner triple systems with subsystems~\cite{KOP}.
A classification of \STSS{21} with nontrivial automorphisms
is available~\cite{K}, so the number of isomorphism classes can be obtained with
the Orbit--Stabilizer theorem and a count of the number of labeled
\STSS{21}.

A general approach for counting labeled combinatorial structures as fast
as possible is to divide the structures into parts and then
do the counting for each part. Such divisions come naturally for certain
objects, such as Latin squares (via Latin rectangles) and one-factorizations of
complete graphs (via sets of one-factors), but for Steiner triple systems the
situation is not so obvious. In the approach developed in the current work,
a division of the blocks into two parts is obtained via graphs that are close
to being regular. Classification of such graphs is therefore needed as an
ingredient. The main result of the current work is as follows.

\begin{theorem}\label{thm:main}
The number of isomorphism classes of Steiner triple systems of order\/ $21$
is\/ $14{,}796{,}207{,}517{,}873{,}771$.
\end{theorem}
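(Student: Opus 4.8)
The plan is to obtain the count indirectly, via the number of \emph{labeled} \STSS{21}, rather than by classifying isomorphism classes directly. The key identity is the Orbit--Stabilizer theorem: if $L$ denotes the number of labeled \STSS{21} and the isomorphism classes are indexed by $i$ with automorphism groups of orders $|\mathrm{Aut}_i|$, then $L = 21! \sum_i 1/|\mathrm{Aut}_i|$. Since a classification of \STSS{21} with \emph{nontrivial} automorphism group is already available in the literature~\cite{K}, the contribution to this sum from all systems with nontrivial automorphisms can be computed exactly: one sums $21!/|\mathrm{Aut}_i|$ over the known rigid-free representatives and records both their total labeled count and their number of isomorphism classes. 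The systems with trivial automorphism group each contribute exactly $21!$ labeled copies, so if $N_{\mathrm{triv}}$ is the number of such isomorphism classes, the total labeled count is $L = 21!\,N_{\mathrm{triv}} + (\text{contribution of the nontrivial classes})$. Thus the sought number of isomorphism classes is $N_{\mathrm{triv}}$ plus the (known) number of classes with nontrivial automorphisms, and the only genuinely unknown quantity is $L$.

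First I would reduce the problem to computing $L$ exactly. Solving for $N_{\mathrm{triv}}$ gives $N_{\mathrm{triv}} = \bigl(L - C\bigr)/21!$, where $C$ is the labeled count coming from the nontrivially-automorphic systems; this division is exact because every trivial system contributes the full $21!$. Hence the entire difficulty is concentrated in determining the integer $L$, the number of labeled \STSS{21}, which is astronomically large and cannot be found by naive enumeration.

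The central step, and the main obstacle, is the exact computation of $L$. Here I would exploit the division of the blocks into two parts mediated by near-regular graphs, as foreshadowed in the introduction. Concretely, one partitions the block set of a labeled STS according to some canonical combinatorial substructure (a near-regular graph $G$ on the point set capturing part of the block incidences), so that $L = \sum_G N_{\mathcal{D}}(G)$, where each term counts the labeled completions compatible with a fixed graph $G$ drawn from a classified family $\mathcal{W}$, weighted appropriately via the quantities $\NFGW$ and $\tFGW$ introduced for this purpose. The graphs $G$ need only be enumerated up to isomorphism, with orbit-counting corrections $\tDG$ accounting for the number of labeled realizations of each isomorphism type; the completions are then counted by an exact algorithm rather than listed. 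The hard part will be twofold: classifying the relevant near-regular graphs (a nontrivial isomorph-rejection task in its own right), and making the per-graph counting both correct and fast enough that the colossal sum can be evaluated within available computational resources.

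Finally, I would assemble the pieces: combine the computed $L$ with the bookkeeping constant $C$ and the known count of nontrivially-automorphic classes to produce $N_{\mathrm{triv}}$ and hence the total $14{,}796{,}207{,}517{,}873{,}771$. Independent verification is essential at this scale, so I would cross-check $L$ by at least two methodologically distinct computations (for instance, two different block-splitting schemes or two independent implementations) and confirm that the resulting value of $N_{\mathrm{triv}}$ is a nonnegative integer, which serves as a built-in consistency test on the arithmetic.
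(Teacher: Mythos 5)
Your high-level architecture is indeed the paper's: reduce the problem via the Orbit--Stabilizer theorem to computing the number $L$ of labeled \STSS{21}, use the existing classification of the systems with nontrivial automorphisms to isolate $N_{\mathrm{triv}}$, and obtain $L$ by splitting the blocks over near-regular graphs whose isomorphism classes are generated by computer. The reduction in your first two paragraphs is correct and matches Section~\ref{sect:iso}.

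The gap is in the central counting step. Your formula $L=\sum_G \NDG{}$, with the remaining factors dismissed as ``weighted appropriately,'' omits the two ideas that make the method work. First, the sum over graphs counts each labeled STS once for \emph{every} copy of the defining substructure it contains, so one must divide by that multiplicity $\mathrm{N}'$ --- and this is only legitimate because the defining structure is deliberately chosen (a PBD$(5,\{2,3\})$ with $\mathcal{B}'_3=\{012,034\}$, giving $\mathrm{N}'=945$) so that the multiplicity is the \emph{same constant} in every \STS{21}. A generic ``canonical substructure'' does not have this property (e.g.\ the Pasch configuration, $w=6$, occurs a varying number of times), and without constancy the division is impossible and the count is wrong. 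Second, the per-graph contribution is the product of two independent counts, $\NDG{}\cdot\NFGW{}$ (triangle decompositions of $\overline{G}$ and collections of compatible matchings), multiplied by the number $v!/(w!\,|\mathrm{Aut}(G)|)$ of labeled placements of $G$ and a completion factor $K$, as in Theorem~\ref{thm:count}; summing $\NDG{}$ alone, or attaching $\NFGW{}$ as an unspecified ``weight,'' does not yield $L$. You also misassign the paper's notation: $\tDG{}$ and $\tFGW{}$ are computing times used only to select $w$, not orbit-counting corrections, and $\mathcal{W}$ is the multiset of sets $W_p$ attached to a single graph, not a classified family of graphs. Your integrality check on $N_{\mathrm{triv}}$ is the paper's own validation, though the paper sharpens it into a divisibility-by-$\mathrm{N}'$ argument rather than running a second independent computation.
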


Note that although
this work is about counting and not about classification, representatives
of each isomorphism class are \emph{seen piecewise} many times.
It might even be possible to modify
the counting algorithm to investigate also for \STSS{21} some of the properties
that were studied for \STSS{19} in~\cite{Cetal}.

The paper is organized as follows. The general approach is presented in
Section~\ref{sect:general}. In Section~\ref{sect:compute}, computational
subproblems are considered: the choice of a partition of the
blocks (Section~\ref{sect:part}), classification of graphs with
given degree sequences (Section~\ref{sect:degree}),
counting labeled systems (Section~\ref{sect:label}),
and counting and validating the number of isomorphism classes
(Section~\ref{sect:iso}).
Finally, in Section~\ref{sect:results}, the computational results are
summarized.

\section{General Approach}\label{sect:general}

The general framework considered here builds on that of~\cite{HO,KOP},
where Steiner triple systems with subsystems are considered.
Actually, the approach for classifying \STSS{19} used in~\cite{KO04}
can also be put into this framework.

In~\cite{HO,KO04,KOP}, the blocks are partitioned into three sets:
a \emph{defining set} $\mathcal{B}'$ that forms a subsystem
and sets $\mathcal{F}$ and $\mathcal{D}$
whose blocks intersect the point set of $\mathcal{B}'$ in one point
and zero points, respectively. As $\mathcal{B}'$ contains the blocks
of a subsystem---which
in~\cite{KO04} is just a sub-\STS{3}, a single block---no
block can intersect the point set of $\mathcal{B}'$ in exactly two points.

Before giving the theorem~\cite{HO,KOP} showing the full picture we need to
define configurations.
A $(v_r,b_k)$ \emph{configuration} is an incidence structure with $v$ points
and $b$ blocks, such that each block contains $k$ points, each point occurs
in $r$ blocks, and two different blocks intersect in at most one point.

\begin{theorem}\label{thm:splitting}
Let $(V,\mathcal{B})$ be an\/ \STS{v} that has a \STS*{w} $(W,\mathcal{B}')$.
Then
\begin{enumerate}
\item $\mathcal{B} = \mathcal{B}' \cup \mathcal{F} \cup \mathcal{D}$ where
$\mathcal{F}$ and $\mathcal{D}$ are the sets of blocks that intersect $W$ in\/ $1$ and\/ $0$ points, respectively,
\item $\mathcal{F} = \bigcup_{p \in W} \mathcal{F}_p$ where $\mathcal{F}_p$ is the set of blocks in $\mathcal{F}$ that contain $p$,
\item $\mathcal{F}'_p = \{B \setminus \{p\} : B \in \mathcal{F}_p\}$
with $p \in W$ is a\/ \mbox{$1$-factor} of a graph $G$ with vertices $V \setminus W$ and edges\/ $\bigcup_{p \in W} \mathcal{F}'_p$,
\item $\{\mathcal{F}'_p : p \in W\}$ is a\/ \mbox{$1$-factorization} of $G$,
\item $G$ is $w$-regular and its complement $\overline{G}$ is $(v-2w-1)$-regular, and\label{thm:splittingG}
\item $\overline{G}$ can be decomposed into a set of edge-disjoint\/ \mbox{$3$-cycles}---$\mathcal{D}$ being one possible set---which
forms a \[({(v-w)}_{(v-2w-1)/2},{((v-w)(v-2w-1)/6)}_{3})\] configuration.
\end{enumerate}
\end{theorem}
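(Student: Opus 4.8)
The plan is to let the defining axiom of a Steiner triple system---that every $2$-subset of points lies in exactly one block---do essentially all the work, applied in turn to the three kinds of $2$-subsets distinguished by how many of their endpoints lie in $W$. A preliminary observation worth recording is that $v$ and $w$ are both odd, being $\equiv 1,3 \pmod 6$, so $|V \setminus W| = v-w$ is even; this is what makes it meaningful to speak of $1$-factors on $V \setminus W$ at all.

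For (i) and (ii) the one substantive point is that no block of $\mathcal{B}$ can meet $W$ in exactly two points. Indeed, if a block $B$ contained a $2$-subset $\{x,y\} \subseteq W$, then since $(W,\mathcal{B}')$ is a subsystem the pair $\{x,y\}$ already lies in some block $B' \in \mathcal{B}' \subseteq \mathcal{B}$ with $B' \subseteq W$; uniqueness of the block on a pair forces $B = B'$, contradicting that $B$ has a point outside $W$. Hence every block meets $W$ in $0$, $1$, or $3$ points, the last case being exactly the blocks of $\mathcal{B}'$, and collecting the remaining blocks into $\mathcal{D}$ (zero points) and $\mathcal{F}$ (one point) gives the partition in (i). Statement (ii) is then immediate, since each block of $\mathcal{F}$ contains a unique point of $W$ and so lies in exactly one $\mathcal{F}_p$.

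For (iii)--(v) I would turn to the mixed pairs. Fix $p \in W$ and an outside vertex $a \in V \setminus W$: the pair $\{p,a\}$ lies in a unique block, which by the previous paragraph meets $W$ only in $p$, hence has the form $\{p,a,b\}$ with $b \in V\setminus W$. This says precisely that $a$ is covered by exactly one edge of $\mathcal{F}'_p$, so $\mathcal{F}'_p$ is a $1$-factor of $G$, proving (iii). Edge-disjointness of distinct factors is again uniqueness: an edge $\{a,b\}$ lying in both $\mathcal{F}'_p$ and $\mathcal{F}'_q$ would place the pair $\{a,b\}$ in the two distinct blocks $\{p,a,b\}$ and $\{q,a,b\}$. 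Since the edge set of $G$ is by definition $\bigcup_{p\in W}\mathcal{F}'_p$, this yields the $1$-factorization (iv); summing the $w$ edge-disjoint factors through any vertex gives the $w$-regularity of $G$, and the complement degree $(v-w-1)-w = v-2w-1$ gives (v).

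For (vi) I would apply uniqueness a final time to the outside pairs. Each $\{a,b\} \subseteq V\setminus W$ lies in a unique block, which is either an $\mathcal{F}$-block---in which case $\{a,b\}$ is an edge of $G$---or a $\mathcal{D}$-block $\{a,b,c\} \subseteq V\setminus W$. Thus the $\mathcal{D}$-blocks are triangles whose edges are exactly the edges of the complete graph on $V\setminus W$ not used by $G$, i.e.\ the edges of $\overline{G}$, and uniqueness makes these triangles pairwise edge-disjoint, so $\mathcal{D}$ decomposes $\overline{G}$ into $3$-cycles. The configuration parameters follow by counting: the $v-w$ vertices each have degree $v-2w-1$ in $\overline{G}$ and each triangle through a vertex absorbs two incident edges, giving replication number $(v-2w-1)/2$; distributing the $(v-w)(v-2w-1)/2$ edges of $\overline{G}$ among triangles of three edges gives $(v-w)(v-2w-1)/6$ blocks, each of size $3$; and two distinct $\mathcal{D}$-blocks meet in at most one point, since sharing two would mean sharing an edge. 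I do not expect a genuine obstacle: every step specializes the pair-uniqueness axiom, and the only care required is in choosing which type of pair to test at each stage and in verifying the at-most-one-point intersection condition that makes the final incidence structure a configuration.
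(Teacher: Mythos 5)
Your proof is correct, and it follows exactly the route the paper intends: the paper states this theorem without proof (citing earlier work), and the only justification it supplies in the surrounding text---that no block can intersect $W$ in exactly two points because $\mathcal{B}'$ is a subsystem---is precisely your key observation for (i), with the remaining items being the same routine applications of pair-uniqueness and degree counting that you carry out.
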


The fact that $G$ is a regular graph in
Theorem~\ref{thm:splitting}\ref{thm:splittingG} is essential.
Steiner triple systems with subsystems can therefore be constructed via a classification
of certain regular graphs. We shall now modify this approach for situations where we still
have a partition of the point set into $W$ and $V \setminus W$, but
$W$ does not induce a subsystem.
Graphs will play a central role also in the modified approach, but the graphs considered
will have also other degree sequences than those of regular graphs. Informally, one
could call the graphs nearly regular. The order of the vertices will not matter, so
we may use an abbreviated form $d_1^{n_1}d_2^{n_2}\cdots d_k^{n_k}$ for a degree sequence with
$n_i$ copies of $d_i$, $1 \leq i \leq k$.

A PBD($w$,$K$) \emph{pairwise balanced design} is a pair $(W,\mathcal{B}')$, where
$W$ is a set of $w$ points, $\mathcal{B}'$ is a set of blocks with sizes from $K$,
and every pair of distinct points occurs in exactly one block.
For a $(V,\mathcal{B})$ STS and any point set $W \subseteq V$,
the pair $(W,\mathcal{B}')$ where
\[
\mathcal{B}' = \{B \cap W : B \in \mathcal{B},\ |B \cap W| \geq 2\}
\]
is a PBD($|W|$,$\{2,3\}$).

We shall now see what happens when $W$ induces a
PBD($|W|$,$\{2,3\})$.
The
items of Theorem~\ref{thm:splittingnew} follow those
of Theorem~\ref{thm:splitting}. As the results follow directly
from definitions, the theorem is stated without proof.

\begin{theorem}\label{thm:splittingnew}
Let $(V,\mathcal{B})$ be an\/ \STS{v} and let $W \subseteq V$.
Then
\begin{enumerate}
\item $\mathcal{B} = \mathcal{B}'' \cup \mathcal{F} \cup \mathcal{D}$ where
  $ \mathcal{B}''$, $\mathcal{F}$, and $\mathcal{D}$ are the sets of blocks that intersect
  $W$ in at least $2$, exactly\/ $1$, and exactly\/ $0$ points, respectively,
\item $\mathcal{F} = \bigcup_{p \in W} \mathcal{F}_p$ where $\mathcal{F}_p$ is the set of blocks
  in $\mathcal{F}$ that contain $p$,
\item\label{thm:factor}
  $\mathcal{F}'_p = \{B \setminus \{p\} : B \in \mathcal{F}_p\}$ with $p \in W$ forms a partition of\/
  $V \setminus (W \cup W_p)$, where $W_p = \bigcup_{B: p \in B \in \mathcal{B}''} B \cap (V \setminus W)$;
  $\mathcal{F}'_p$ can also be viewed as a color class of a
  proper edge-coloring of a graph $G$ with vertices $V \setminus W$ and edges
  $\bigcup_{p \in W} \mathcal{F}'_p$,
\item $\{\mathcal{F}'_p : p \in W\}$ is a proper edge-coloring of $G$,
\item the degree of a vertex $v$ in $G$ is\/ $|W|-|\{p : v \in W_p\}|$, and
\item $\overline{G}$ can be decomposed into a set of edge-disjoint\/ \mbox{$3$-cycles}---$\mathcal{D}$
  being one possible set.
\end{enumerate}
\end{theorem}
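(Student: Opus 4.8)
The plan is to derive all six items directly from the defining property of a Steiner triple system---that every $2$-subset of $V$ lies in exactly one block---together with the trivial classification of a block $B$ by the value $|B \cap W| \in \{0,1,2,3\}$. Items~(i) and~(ii) are immediate: a block lies in $\mathcal{D}$, $\mathcal{F}$, or $\mathcal{B}''$ according to whether $|B \cap W|$ is $0$, $1$, or at least $2$, so these three sets partition $\mathcal{B}$; and since a block of $\mathcal{F}$ meets $W$ in a single point, it lies in exactly one $\mathcal{F}_p$.

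The heart of the argument is item~(iii). First I would fix $p \in W$ and recall that the blocks through $p$ partition $V \setminus \{p\}$ into the pairs $B \setminus \{p\}$. No block of $\mathcal{D}$ contains $p$, so each such block lies in $\mathcal{B}''$ or in $\mathcal{F}_p$. A block $B \in \mathcal{B}''$ through $p$ satisfies $|B \cap W| \geq 2$, hence $B \setminus \{p\}$ contains at most one point of $V \setminus W$, and the set of all such points is exactly $W_p$; a block $B \in \mathcal{F}_p$ contributes the pair $B \setminus \{p\} \subseteq V \setminus W$. Restricting the partition of $V \setminus \{p\}$ to $V \setminus W$ then shows that $V \setminus W$ is the disjoint union of $W_p$ and the pairs in $\mathcal{F}'_p$, i.e.\ $\mathcal{F}'_p$ partitions $V \setminus (W \cup W_p)$. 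The bookkeeping around $W_p$---checking that the points removed from $V \setminus W$ are precisely those absorbed by the $\mathcal{B}''$-blocks through $p$, with no double counting---is the one place where care is needed, and is the step I would write out most carefully. For the colouring interpretation, each $\mathcal{F}'_p$ is a set of disjoint pairs, hence a matching; and if a pair $\{a,b\}$ lay in both $\mathcal{F}'_p$ and $\mathcal{F}'_{p'}$, then $\{p,a,b\}$ and $\{p',a,b\}$ would both be blocks through $\{a,b\}$, forcing $p = p'$. Thus $G$ is simple and each edge receives a single colour, which also gives item~(iv).

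Item~(v) follows by the same local analysis from the vantage point of a vertex $v \in V \setminus W$. For each $p \in W$ the unique block through $\{p,v\}$ has a third point $z$; if $z \in W$ the block lies in $\mathcal{B}''$ and places $v \in W_p$, while if $z \notin W$ the block lies in $\mathcal{F}_p$ and contributes the edge $\{v,z\}$ of colour $p$ at $v$. These two cases are mutually exclusive and together range over all $p \in W$, so the number of edges incident with $v$ equals $|W| - |\{p : v \in W_p\}|$, as claimed.

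Finally, for item~(vi) I would observe that a block $B \in \mathcal{D}$ is a triple contained in $V \setminus W$, i.e.\ a triangle on the vertex set of $\overline{G}$. A pair $\{a,b\} \subseteq V \setminus W$ is a non-edge of $G$ precisely when the third point of its unique block lies in $V \setminus W$, that is, when that block belongs to $\mathcal{D}$. Hence every edge of $\overline{G}$ lies in exactly one $\mathcal{D}$-triangle, and the triangles coming from $\mathcal{D}$ are edge-disjoint, so $\mathcal{D}$ exhibits $\overline{G}$ as an edge-disjoint union of $3$-cycles. I expect no genuine obstacle beyond the $W_p$-bookkeeping of item~(iii); every remaining claim is a direct translation of the statement that each pair lies in a unique block.
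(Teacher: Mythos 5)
Your proof is correct, and it follows exactly the route the paper has in mind: the authors state this theorem without proof on the grounds that all items ``follow directly from definitions,'' and your argument is precisely that routine verification, using the unique-block-through-each-pair property to classify, for each $p\in W$ (resp.\ each $v\in V\setminus W$), the blocks through $p$ (resp.\ through $\{p,v\}$ or $\{a,b\}$). The $W_p$-bookkeeping in item~(iii) and the observation that a pair in $V\setminus W$ is a non-edge of $G$ exactly when its block lies in $\mathcal{D}$ are handled correctly, so nothing is missing.
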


For counting the $\STSS{v}$
via the graphs $G$ in Theorem~\ref{thm:splittingnew},
one first needs to define the PBD$(w,\{2,3\})$ induced by $W$. To get an easy
formula for the final count, the number of occurrences of this PBD should only
depend on the order $v$.
The next step is then to determine possible degrees of the graphs $G$ and classify them.
For each classified graph $G$, there is the computational task of finding
the number of proper edge-colorings of $G$
(with some additional requirements) and the number of decompositions of $\overline{G}$
into triangles. From the data of these computations, and a classification of the
\STSS{v} with nontrivial automorphisms, the number of isomorphism classes can
finally be obtained using the Orbit--Stabilizer theorem.

\section{Computational Subtasks}\label{sect:compute}

We shall now look at the subtasks of the outlined algorithm,
with details for $\STSS{21}$ at the end of each subsection and
separated from the general discussion.

\subsection{Choice of Defining Set}\label{sect:part}

The choice of the defining set, that is, the PBD$(w,\{2,3\})$ induced by $W$,
is critical for the length of the computation. The final computation is expected to be very
extensive, and a proper and justified choice cannot be made without
estimations and experiments. As for time usage, the core subproblem is that of
obtaining the number of possible sets $\mathcal{D}$ and $\mathcal{F}$ for a given
graph $G$ (Theorem~\ref{thm:splittingnew}\ref{thm:factor}). The latter number
further depends on the sets $W_p$, also defined in
Theorem~\ref{thm:splittingnew}\ref{thm:factor}, and these numbers are denoted
by \NDG{} and \NFGW{}, where $\mathcal{W}$ is
a \emph{multiset} whose elements are the sets $W_p, p \in W$.
It is important to notice that
\NDG{} is defined to be the number of decompositions
of the \emph{complement} of $G$. Algorithms for
obtaining these numbers will be discussed in Section~\ref{sect:decomp}.

The problem of classifying the graphs $G$ is less time-consuming and
even more so in an experimental phase, where randomly
generated graphs are considered.
Random graphs can be obtained in a rather
straightforward manner using a Markov chain Monte Carlo (MCMC) algorithm.
For a given degree sequence, one graph can be constructed with the Havel--Hakimi
algorithm~\cite{H1,H2}. Thereafter, an MCMC algorithm~\cite{RJB,VL}
can be applied to get a sequence of more graphs. In the core of this
algorithm is a switch where two random edges $\{a,b\}$ and $\{c,d\}$
are replaced with the edges $\{a,c\}$ and $\{b,d\}$ if the latter edges
do not already exist. As the produced graphs are only used to get rough
estimates for the main algorithm, it is not necessary to be meticulous
in tuning the details of this approach.

To optimize the computing time of the main counting algorithm, experiments
can be carried out to find a good choice of PBD$(w,\{2,3\})$s $(W,\mathcal{B}')$.
We denote $\mathcal{B}' = \mathcal{B}'_3 \cup \mathcal{B}'_2$ where the subindex
gives the block size. It is further desired that the STSs do not contain many PBDs of
the given type. Namely, the more copies there are, the more times each
STS will be encountered in the final computation and the longer the
execution time will be. In this sense, it is generally good to have $|\mathcal{B}'_3|$
large. (The extremal case of $|\mathcal{B}'_3|=0$ is referred to as an
\emph{independent set} in a Steiner triple system.)
We let $W = \{0,1,2,\ldots ,w-1\}$ in the sequel.

A central feature of the main algorithm is that it counts
$\NDG{}\NFGW{}$ labeled STSs with a
computing time in the order
of $\NDG{}+ \NFGW{}$.
It is therefore desired to have
\NDG{} and
\NFGW{}---alternatively, the computing times for these---in
approximately the same order of magnitude.

\paragraph{STS(21)}
For \STSS{21} and
an \STS{3}---a single block---as a defining set (so $w=3$), there is a huge
imbalance and determining \NDG{} is very time-consuming~\cite{KO04}. For large $w$,
there is an imbalance in the other direction:
for \STSS{21} and an \STS{7} as a defining set (so $w=7$),
$\NDG{} =0$ for most of the graphs~\cite{HO}. Therefore,
possible values of $w$ are here narrowed down to $4 \leq w \leq 6$.

An STS(21) contains 1260, 4725, and 10584 PBDs that are independent sets
for $w = 3,4$, and 5, respectively~\cite{FGG}. These numbers
are up to more than an order of magnitude bigger than for other choices
as we shall soon see, and they support the choice of maximizing $|\mathcal{B}'_3|$.

For \STSS{21} with PBD$(w,\{2,3\})$s $(W,\mathcal{B}')$,
$4 \leq w \leq 6$ restricted in the aforementioned way,
experimental results are presented in Table~\ref{tbl:est}.
Only the blocks $\mathcal{B}'_3$ of size 3 in a PBD are given,
as these uniquely determine the blocks
of size 2. The column
$\mathrm{N}'$ gives the number of $w$-subsets of points in an \STS{21} that induce such a
PBD. For $w=6$, we have a Pasch configuration, also known as a
quadrilateral; the number of occurrences is not constant in this case.

For 1000 random graphs $G$ with given degree sequences, the counts
\NDG{} and \NFGW{}
are determined, and the averages are
tabulated in the columns $\overline{\NDG{}}$ and
$\overline{\NFGW{}}$, respectively.
The average computing times, in milliseconds, are
shown in the columns $\overline{\tDG{}}$ and
$\overline{\tFGW{}}$. It turns out that
$\overline{\NDG{}}\cdot\overline{\NFGW{}}$
differs from the average of the product,
$\overline{\NDG{}\cdot \NFGW{}}$,
typically by only a few percent.
The time estimations are incomparable between Table~\ref{tbl:est} and the final computation in Section~\ref{sect:results}, because less optimized versions of the counting algorithms were applied.
Nevertheless, the time estimations are consistent among the entries in Table~\ref{tbl:est} and allow comparisons.

The value of $\mathcal{W}$ is uniquely determined by $G$ for all
instances of Table~\ref{tbl:est} but the degree sequence $3^4 5^{12}$.
Denoting the set of vertices of degree 3 in such a graph by
$\{a,b,c,d\}$, there are three possibilities for $\mathcal{W}$:
\[
\begin{array}{l}
\{\{\},\{a,b\},\{c,d\},\{a,c\},\{b,d\}\},\\
\{\{\},\{a,b\},\{c,d\},\{a,d\},\{b,c\}\},\mbox{\ and}\\
\{\{\},\{a,c\},\{b,d\},\{a,d\},\{b,c\}\}.
\end{array}
\]
The entries in Table~\ref{tbl:est}
for this case are summed over all three subcases.

The choice of $w$ can now be made by comparing the values
of
\begin{align*}
\frac{\overline{\NDG{}} \cdot \overline{\NFGW{}}}
{\overline{\tDG{}} + \overline{\tFGW{}}}
.
\end{align*}
The obvious choice is $w=5$ with $\mathcal{B}'_3 = \{012,034\}$.

\begin{table}[htbp]
\caption{Impact of different choices of $(W,\mathcal{B}')$}\label{tbl:est}
\vspace{-2mm}
\begin{center}
\setlength{\tabcolsep}{3pt}
\begin{tabular}{clrlrrrr}
\hline
\\[-4.3mm]
$w$ & $\mathcal{B}'_3$ & $\mathrm{N}'$ &  $G$ & $\overline{\NDG{}}$ & $\overline{\tDG{}}$
& $\overline{\NFGW{}}$ & $\overline{\tFGW{}}$ \\
\hline
4 & $\{012\}$    &1260 & $2^3 4^{14}$ &
160837000.0 & 362091.0 & 221.7 & 4.7 \\
5 & $\{012,034\}$& 945 & $1^2 5^{14}$ &
12666.6 & 33.9 & 3994.0 & 11.2 \\
  &              &     & $1^1 3^2 5^{13}$ &
10363.2 & 30.1 & 7088.9 & 20.8 \\
  &              &     & $3^4 5^{12}$ &
8714.6 & 26.5 & 26439.6 & 98.5 \\
6 & $\{012,034,$ &                  & $0^1 6^{14}$ &
2.3 & 0.2 & 539449.0 & 858.1 \\
  & \hspace*{2mm}$135,245\}$&       & $2^1 4^1 6^{13}$ &
1.7 & 0.2 & 538846.0 & 817.4 \\
  &                         &       & $4^3 6^{12}$ &
1.3 & 0.2 & 1306050.0 & 2441.3 \\
\hline
\end{tabular}
\end{center}
\end{table}

\subsection{Classifying Graphs With Given Degree Sequences}\label{sect:degree}

Farad\v{z}ev~\cite{F} studied the problem of classifying graphs with given degree
sequences already in the 1970s. The main focus in published studies on
algorithms for classifying graphs with given degree sequences has been on
regular graphs, with work by Meringer~\cite{M} showing the
true potential of such algorithms. Specific algorithms have been
published in particular for graphs with degree 3, that is,
cubic graphs~\cite{B,BGM}.

The graph isomorphism program \texttt{nauty}~\cite{MP} contains a suite
of programs called \texttt{gtools}, which in turn contains the \texttt{geng}
program for generating graphs up to isomorphism and getting
the automorphism groups simultaneously. The \texttt{geng} program is able
to construct graphs with degrees in a given interval. Moreover,
\texttt{geng} can be called from another program, thereby eliminating
a need of extensive computer memory. This is precisely what is required for
the final computation of the counting approach presented here.

\paragraph{STS(21)}

Let us consider the degree sequences for \STSS{21} with $w=5$ and
$\mathcal{B}'_3 = \{012,034\}$ one by one. The final
graphs will in all cases have 16 vertices and 36 edges. As the \texttt{geng}
program will produce also graphs that do not have the desired degree
sequences, the produced graphs must be filtered. One feature of
\texttt{geng} is that it can produce parts of all graphs with little
overhead. In the current work where the computation is
split in parts and distributed, it is indeed important that one does not have
to run the whole graph generation for each part.

Features of the \texttt{geng} program like the aforementioned
one make it an ideal tool here. As the time consumption for graph generation
is negligible with respect to the total computing time of the counting algorithm,
it is not necessary to consider the possibility of additional pruning
in the search tree of \texttt{geng} or using software dedicated to
generating graphs with given degree sequences (but lacking certain features
of \texttt{geng}).

$1^2 5^{14}$: For such a graph $G$, the graph $G'$ induced by the vertices
of degree 5 has degree sequence $5^{14}$, $3^1 5^{13}$, or $4^2 5^{12}$.
Moreover, $|\mbox{Aut}(G)| = 2|\mbox{Aut}(G')|$ in the first
case---the transposition of the two vertices of degree 1 in $G$ is then always in
$\mbox{Aut}(G)$---and $|\mbox{Aut}(G)| = |\mbox{Aut}(G')|$ in the other two cases.
We shall later see that the second of the three subcases can be ignored.

$1^1 3^2 5^{13}$: For such a graph $G$, the graph $G'$ induced by the vertices
of degree at least 3 has degree sequence $2^1 3^1 5^{13}$ or $3^2 4^1 5^{12}$.
In both cases, $|\mbox{Aut}(G)| = |\mbox{Aut}(G')|$.

$3^4 5^{12}$: This case is handled directly.

For the first two degree sequences, there is in all cases a unique way of
adding edges to get $G$ from $G'$.

The parameters of the instances to be computed with \texttt{geng} are summarized
in Table~\ref{tbl:geng}, with the case that can be ignored within parentheses.
In the first two columns, the final and
intermediate degree sequences are given. The columns contain the order of the graph $n$,
the number of edges $e$, the minimum degree $d$, and the maximum degree $D$.

\begin{table}[htbp]
\caption{Parameters of \texttt{geng} instances}\label{tbl:geng}
\begin{center}
\begin{tabular}{llrrrl}
\hline
$S_1$      & $S_2$      & $n$ & $e$ & $d$ & \hspace*{-1mm}$D$\\\hline
$1^2 5^{14}$ & $5^{14}$    & 14  & 35  & 5   & 5\\
\hspace*{-1.6mm}($1^2 5^{14}$ & $3^1 5^{13}$ & 14  & 34  & 3   & 5)\\
$1^2 5^{14}$ & $4^2 5^{12}$ & 14  & 34  & 4   & 5\\
$1^1 3^2 5^{13}$ & $2^1 3^1 5^{13}$ & 15 & 35 & 2 & 5\\
$1^1 3^2 5^{13}$ & $3^2 4^1 5^{12}$ & 15 & 35 & 3 & 5\\
$3^4 5^{12}$ & $3^4 5^{12}$ & 16 & 36 & 3 & 5\\
\hline
\end{tabular}
\end{center}
\end{table}

\subsection{Counting Graph Decompositions}\label{sect:decomp}

Given $G$ and $\mathcal{W}$,
we want to determine \NDG{} and
\NFGW{}. This is here done with algorithms
that explicitly find all such decompositions.

Steiner triple systems are decompositions of complete graphs into
3-cycles (triangles), and for \NDG{} we have decompositions
of other graphs. The problem of finding such decompositions is
recurrent in computational studies of Steiner triple systems~\cite{KO04} and can be
considered within the framework of exact cover. In the \emph{exact
cover problem}, we have a set $S$ and a collection $\mathcal{S}$ of subsets
of $S$, and the decision problem asks whether there exists a partition of $S$
using sets from $\mathcal{S}$. One can further ask for one or all such partitions.
Any graph decomposition problem can be phrased as an instance of the exact
cover problem, where $S$ is the set of edges and $\mathcal{S}$ contains
the set of candidate subgraphs in a decomposition. Instances of the exact cover
problem encountered in
the current work were solved using the \texttt{libexact} software~\cite{KP}.

For the computation of \NFGW{},
experiments show that a two-stage approach analogous to that used in~\cite{HO} is
efficient.
Let $G = (V \setminus W,E)$ and $\mathcal{W}$ be fixed, with
$W_0,W_1,\ldots,W_{w-1}$ in $\mathcal{W}$.

In the first stage,
for $0 \leq i \leq w-1$,
all perfect matchings in the graph induced by $V \setminus (W \cup W_i)$ are determined
(for example, using exact cover) and saved in $\mathcal{F}'_i$.
In the second stage, we want to decompose $G$ by taking exactly one matching from $\mathcal{F}'_i$ for each $i$, $0 \leq i \leq w-1$.
The following theorem shows that it suffices to consider disjointness of matchings.

\begin{theorem}
A set of disjoint matchings of a graph $G$ with one matching from each
of the collections $\mathcal{F}'_i$, $0 \leq i \leq w-1$, decomposes $G$.
\end{theorem}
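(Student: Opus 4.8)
The plan is to prove this by a straightforward counting argument, showing that the disjointness condition already forces the union of the chosen matchings to be a proper edge-decomposition of $G$. Let me first set up what needs to be verified.

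Let me understand the claim. We have a graph $G = (V \setminus W, E)$, and for each $i$ with $0 \le i \le w-1$, a collection $\mathcal{F}'_i$ of perfect matchings of the subgraph induced by $V \setminus (W \cup W_i)$. We pick one matching $M_i \in \mathcal{F}'_i$ for each $i$, and we assume that the $M_i$ are pairwise disjoint (share no edge). The claim is that $\bigcup_{i} M_i = E$, i.e., the $M_i$ partition the edge set of $G$.

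First I would observe that each $M_i \subseteq E$: indeed, $M_i$ is a matching on the vertex set $V \setminus (W \cup W_i)$, and by Theorem~\ref{thm:splittingnew}\ref{thm:factor}, the color class $\mathcal{F}'_p$ for the corresponding point $p$ is exactly a set of edges of $G$ (these are the $\mathcal{F}'_p$ that together form the edge set of $G$). So $\bigcup_i M_i \subseteq E$. Since the $M_i$ are assumed disjoint, we have $|\bigcup_i M_i| = \sum_i |M_i|$, and it remains only to show this equals $|E|$. The key step is therefore a cardinality computation: I would show that $\sum_i |M_i| = |E|$, from which disjointness upgrades the inclusion $\bigcup_i M_i \subseteq E$ to an equality, hence a decomposition.

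The main computation is as follows. Each $M_i$ is a \emph{perfect} matching of the graph induced by $V \setminus (W \cup W_i)$, so $|M_i| = |V \setminus (W \cup W_i)|/2$. Now I would relate $\sum_i |M_i|$ to the degree sum of $G$. A vertex $v \in V \setminus W$ lies in the ground set of $M_i$ precisely when $v \notin W_i$, and by Theorem~\ref{thm:splittingnew}, the degree of $v$ in $G$ is exactly $|W| - |\{p : v \in W_p\}| = |\{i : v \notin W_i\}|$, the number of matchings $M_i$ whose ground set contains $v$. Since $M_i$ is perfect on its ground set, $v$ is covered by exactly one edge of $M_i$ for each such $i$; summing the edge-counts $\sum_i |M_i| = \frac12 \sum_i |V \setminus (W \cup W_i)| = \frac12 \sum_{v \in V \setminus W} \deg_G(v) = |E|$ by the handshake lemma. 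I expect the bookkeeping of these index sets---carefully confirming that $v$ belongs to the ground set of $M_i$ if and only if $v \notin W_i$---to be the only delicate point, and the real content is simply that a family of disjoint subsets of $E$ with total size $|E|$ must exhaust $E$.
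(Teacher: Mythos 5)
Your proposal is correct and follows essentially the same route as the paper's proof: the union of the chosen matchings is a set of pairwise disjoint subsets of $E$, so it suffices to check that the sizes sum to $|E|$. You simply make explicit, via the degree formula of Theorem~\ref{thm:splittingnew} and the handshake lemma, the edge count that the paper's proof leaves implicit in the remark that all matchings in a given $\mathcal{F}'_i$ have the same number of edges.
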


\begin{proof}
Each edge of the matchings is an edge of $G$.
As the matchings are disjoint, each edge of $G$ occurs in at
most one matching. The theorem now follows as all matchings
in $\mathcal{F}'_i$ for a given $i$ have the same number of edges.
\end{proof}

It is straightforward to implement a backtrack search for this problem,
cf.~\cite[p.~36]{OS}. With a very small value of $w$ one can
use nested loops instead. If matchings are stored as bitmaps, then
disjointness of matchings can be determined with an AND operation.
Also \texttt{libexact} can solve instances of this problem.

We shall now see that the number of solutions can actually be
counted without traversing the search tree to its leaves.

\begin{theorem}\label{thm:last}
A set of disjoint matchings, one from each of $w-1$ collections $\mathcal{F}'_i$,
can be extended in a unique way to a decomposition of $G = (V \setminus W,E)$ with
disjoint matchings from all collections $\mathcal{F}'_i$.
\end{theorem}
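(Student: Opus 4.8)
The plan is to exploit the degree/counting structure from Theorem~\ref{thm:splittingnew}. The key observation is that the graph $G$ is decomposed into color classes $\mathcal{F}'_i$, $0 \leq i \leq w-1$, and that the number of edges in each matching of $\mathcal{F}'_i$ is determined by $|V \setminus (W \cup W_i)|$, hence is a fixed constant for each $i$. Since $G$ has a known edge count $|E|$, and since the total is the sum over all $w$ color classes, the sizes of the matchings satisfy a rigid additive relation. First I would fix a set of disjoint matchings $M_0, M_1, \ldots, M_{w-2}$, one from each of the first $w-1$ collections, and consider the subgraph $H = G \setminus (M_0 \cup \cdots \cup M_{w-2})$ consisting of the edges of $G$ not yet covered.

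Next I would argue that $H$ must be exactly a matching belonging to $\mathcal{F}'_{w-1}$, which forces both existence and uniqueness. For uniqueness, the point is that if any decomposition extending the given matchings exists, the final matching is forced to be precisely the set of uncovered edges $H$, since the matchings are disjoint and together they must cover all of $E$; there is no freedom in choosing it. For existence, I would verify that $H$ is in fact a valid member of $\mathcal{F}'_{w-1}$, i.e.\ that the uncovered edges form a perfect matching of the graph induced by $V \setminus (W \cup W_{w-1})$. This follows from a degree count: at each vertex $v \in V \setminus W$, the degree in $G$ equals $|W| - |\{p : v \in W_p\}|$ by Theorem~\ref{thm:splittingnew}, and each color class $\mathcal{F}'_i$ with $v \notin W_i$ contributes exactly one edge at $v$. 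Removing the $w-1$ chosen matchings therefore leaves each admissible vertex with exactly one incident uncovered edge, so $H$ is a perfect matching of the correct induced subgraph.

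The main obstacle I expect is the bookkeeping at the vertices that lie in some $W_i$. Such a vertex has reduced degree in $G$ and is \emph{absent} from the matchings of the collections $\mathcal{F}'_i$ for which $v \in W_i$. I would need to check carefully that, after removing the $w-1$ chosen matchings, the uncovered edges avoid precisely the vertices excluded from $W_{w-1}$, so that $H$ is genuinely a perfect matching on $V \setminus (W \cup W_{w-1})$ rather than covering too few or too many vertices. The cleanest way to handle this is to track, for each vertex $v$, the multiset of collections whose matchings are incident to $v$, and confirm via the degree formula that exactly one collection is left over and that it is the right one.

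In summary, the structure of the argument is: (i) the forced final matching is $H = G \setminus \bigcup_{i<w-1} M_i$, giving uniqueness immediately; (ii) a vertex-by-vertex degree count using Theorem~\ref{thm:splittingnew} shows $H$ is a perfect matching of the induced graph $V \setminus (W \cup W_{w-1})$, hence $H \in \mathcal{F}'_{w-1}$, giving existence. The counting consequence—that one need only enumerate disjoint matchings from $w-1$ collections rather than all $w$—then follows, since the last choice is determined and need not be branched on in the search.
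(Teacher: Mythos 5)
Your proposal is correct and follows essentially the same route as the paper's proof: both arguments observe that every matching in $\mathcal{F}'_i$ saturates the fixed vertex set $V \setminus (W \cup W_i)$, delete the $w-1$ chosen matchings, and use a degree count to conclude that the leftover edges form a perfect matching of the induced subgraph on $V \setminus (W \cup W_{w-1})$, hence a member of the remaining collection, with uniqueness immediate because the leftover edge set is forced. Your version merely spells out the per-vertex bookkeeping that the paper compresses into one sentence.
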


\begin{proof}
For a given $i$, $0 \leq i \leq w-1$, every matching of $\mathcal{F}'_i$
saturates the same subset of $V \setminus W$. After deleting the edges
of a set of $w-1$ disjoint matchings, one from each of $w-1$ collections $\mathcal{F}'_i$, from $E$, we get a graph
with vertex degrees 0 and 1, that is, a matching, which necessarily
occurs in the collection $\mathcal{F}'_i$ from which no matching has yet been taken.
\end{proof}

\begin{theorem}\label{thm:secondlast}
Consider a collection $\mathcal{S}$ of $w-2$ disjoint matchings from different
collections $\mathcal{F}'_i$,
and assume that no matching is included from the collections $\mathcal{F}'_a$
and $\mathcal{F}'_b$, $0 \leq a < b \leq w-1$. Further let $M_a$ ($M_b$)
be the number of matchings in $\mathcal{F}'_a$ ($\mathcal{F}'_b$) that are disjoint
from all matchings of $\mathcal{S}$. Then $M_a = M_b$ and this is
the number of ways $\mathcal{S}$ can be completed to a decomposition of $G$ with
elements from all collections $\mathcal{F}'_i$.
\end{theorem}

\begin{proof}
By Theorem~\ref{thm:last}, after extending $\mathcal{S}$ with a matching from
$\mathcal{F}'_a$, there is exactly one possibility of completing the decomposition
in the desired way, so the total number of completions is $M_a$. In an analogous
way, by first extending with a matching from $\mathcal{F}'_b$, we get that the
total number is $M_b$. It then follows that $M_a = M_b$.
\end{proof}

When some of the sets $W_i$ coincide,
we can combine the corresponding,
identical collections $\mathcal{F}'_i$ to get collections $\mathcal{F}''_j$ for
some $0 \leq j \leq w'$ where $w'<w$. The number of matchings to be
taken from a collection $\mathcal{F}''_j$ is precisely the number of identical collections
combined, and the matchings can now be taken with respect to some defined
total \emph{order}. In this framework, we say that a matching is \emph{compatible} with
a collection $\mathcal{S}$ of matchings if it is disjoint from the matchings
in $\mathcal{S}$ and greater than the matchings in $\mathcal{S}$ that
come from the same collection $\mathcal{F}''_j$.

In the framework of $\mathcal{F}''_j$ we do not
get quite the same situation as in Theorem~\ref{thm:secondlast}. Namely,
when picking the second to last matching it may be that the unique final matching
is not a compatible candidate. However, it certainly is compatible if the total number of
matchings to be taken from its collection $\mathcal{F}''_j$ is 1. In the situation
when we have a collection from which exactly 2 matchings should be taken, we can
avoid considering it during the search, but still maintain its subset of compatible
candidates; if it has $M$ compatible candidates when two matchings are missing,
then the number of completions is $M/2$.

\paragraph{STS(21)}

When counting \STSS{21} with $w=5$ and $\mathcal{B}'_3 = \{012,034\}$,
there are 36 edges in a graph $G$, so bitmaps of matchings fit in
64-bit words. For the degree sequence $3^4 5^{12}$ all $W_i$ are distinct.
By Theorem~\ref{thm:secondlast},
we then need only carry out the search to
the level with $w-2 = 3$ matchings.
The possibilities of $\mathcal{W}$ listed at
the end of Section~\ref{sect:part} show that in each case we need one
matching with 8 edges and four matchings with 7 edges. The number of candidates
is larger for 8 edges than for 7 edges, so we have used Theorem~\ref{thm:secondlast}
and chosen to ignore the matchings with 8 edges in the counting. Obviously,
we then need not even determine the matchings with 8 edges, which speeds up the
algorithm even further.

In the case of \STSS{21} with $w=5$ and $\mathcal{B}'_3 = \{012,034\}$, we get
situations where not all $W_i$ are distinct for
the degree sequences $1^2 5^{14}$ and $1^1 3^2 5^{13}$. For these two
degree sequences, the five values of $W_i$ are, respectively,

\[
\begin{array}{l}
\{\},\{a,b\},\{a,b\},\{a,b\},\{a,b\}\mbox{\ and}\\
\{\},\{a,b\},\{a,b\},\{a,c\},\{a,c\}.
\end{array}
\]

Note that in the former case, with degree sequence $1^2 5^{14}$, the vertices
$a$ and $b$ cannot have the same (unique) neighbor, as one of the matchings
(corresponding to $\{\}$ in $\mathcal{W}$) contains edges saturating all vertices.
A search algorithm will immediately realize this, but the main importance of this
observation is that one of the cases in Table~\ref{tbl:geng} can be ignored in the
classification of graphs $G$.

For the degree sequences $1^2 5^{14}$ and $1^1 3^2 5^{13}$,
in the first stage of the two-stage approach for computing
\NFGW{} we compute 2 and 3
collections of matchings, respectively. For the
degree sequence $1^2 5^{14}$, we search for a decomposition
with 1 matching from one
collection and 4 matchings from the other, and for the degree
sequence $1^1 3^2 5^{13}$, we search for a decomposition with
1 matching from one collection and 2 matchings from each of the two other
collections. These cases are not time-critical for the complete algorithm, so
\texttt{libexact} was used.

\subsection{Counting Labeled Systems}\label{sect:label}

The main question still remains: how to get the total number of labeled
Steiner triple systems from the data obtained as described in Section~\ref{sect:decomp}?
Let $\mathcal{G}$ denote the set of all pairs $(G,\mathcal{W})$ to consider for
given values of $v$, $w$, and $\mathcal{B}'_3$.

\begin{theorem}\label{thm:count}
The total number of labeled\/ \STSS{v} is
\begin{equation}\label{eq:total}
\frac{1}{\mathrm{N}'}
\cdot
\sum_{(G,\mathcal{W}) \in \mathcal{G}}
\frac{Kv!\NDG{}\NFGW{}}{w!|\mathrm{Aut}(G)|},
\end{equation}
where $K$ is the number of ways to complete the triples and matchings
counted in\/ \NDG{} and\/
\NFGW{} to\/ \STSS{v}.
\end{theorem}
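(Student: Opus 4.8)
The plan is to establish \eqref{eq:total} by a double-counting argument applied to the set $\mathcal{X}$ of pairs $(S,W)$, where $S$ is a labeled \STS{v} on $\{0,1,\ldots,v-1\}$ and $W$ is a $w$-subset of its points inducing a PBD of the prescribed type (one whose triples realize the pattern $\mathcal{B}'_3$). First I would count $\mathcal{X}$ by ranging over $S$. Because the defining set was chosen so that the number of such subsets depends only on $v$, every labeled \STS{v} contributes exactly $\mathrm{N}'$ pairs, giving $|\mathcal{X}| = \mathrm{N}'\cdot N$, where $N$ denotes the quantity we seek. Confirming that this multiplicity is genuinely constant, independent of $S$, is the first point that must be secured.

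Next I would count $\mathcal{X}$ by reconstructing $S$ from its pieces. Fix the host set $W$ and the labeled graph $G$ on $U=\{0,\ldots,v-1\}\setminus W$ together with its multiset $\mathcal{W}$, as produced by Theorem~\ref{thm:splittingnew}. I claim the number of labeled \STSS{v} that realize this fixed labeled $(G,\mathcal{W})$ is $K\cdot\NDG{}\cdot\NFGW{}$. Indeed, by Theorem~\ref{thm:splittingnew} the blocks split as $\mathcal{B}''\cup\mathcal{F}\cup\mathcal{D}$, and once $G$ and $\mathcal{W}$ are fixed these three parts can be selected independently: $\mathcal{D}$ is a triangle decomposition of $\overline{G}$ using only non-edges of $G$ ($\NDG{}$ choices), $\mathcal{F}$ is a proper edge-coloring of $G$ compatible with $\mathcal{W}$ using only edges of $G$ ($\NFGW{}$ choices), and the completing blocks $\mathcal{B}''$ involve only pairs between $W$ and $U$ together with the fixed triples on $W$ ($K$ choices). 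Verifying this independence, namely that there is no hidden compatibility constraint among $\mathcal{F}$, $\mathcal{D}$, and the completion beyond what is already encoded in fixing $(G,\mathcal{W})$, is the technical core of the argument.

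It then remains to pass from labeled graphs to isomorphism classes. The host set $W$ can be chosen in $\binom{v}{w}$ ways, and for a fixed class $(G,\mathcal{W})\in\mathcal{G}$ the number of labeled graphs on $U$ isomorphic to $G$ is $(v-w)!/|\mathrm{Aut}(G)|$ by the Orbit--Stabilizer theorem; their product is $\binom{v}{w}(v-w)!/|\mathrm{Aut}(G)| = v!/(w!\,|\mathrm{Aut}(G)|)$. Since $K$, $\NDG{}$, and $\NFGW{}$ are isomorphism invariants and hence constant on the class, multiplying by $K\NDG{}\NFGW{}$ and summing over $(G,\mathcal{W})\in\mathcal{G}$ produces a second expression for $|\mathcal{X}|$. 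Equating it with $\mathrm{N}'\cdot N$ and solving for $N$ yields \eqref{eq:total}.

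The step I expect to be the main obstacle is the symmetry bookkeeping behind the factor $v!/(w!\,|\mathrm{Aut}(G)|)$: one must check that dividing by the full graph automorphism group $|\mathrm{Aut}(G)|$ rather than by the stabilizer of the multiset $\mathcal{W}$ is correct, and that relabeling the points of $W$ introduces no factor beyond the $w!$ already present. This is precisely where the choices of Section~\ref{sect:part} are used, in particular that $\mathcal{W}$ is essentially determined by $G$, with the degree sequence $3^4 5^{12}$ handled by summing over its three subcases, so that the automorphisms of $G$ act compatibly with $\mathcal{W}$ and with the fixed PBD on $W$ and the clean factor is indeed the correct count of labeled placements.
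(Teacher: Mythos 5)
Your proposal follows essentially the same double-counting argument as the paper: count pairs (labeled \STS{v}, distinguished defining set), obtain the factor $\binom{v}{w}(v-w)!/|\mathrm{Aut}(G)| = v!/(w!\,|\mathrm{Aut}(G)|)$ for placing the labeled graph, multiply by $\NDG{}$, $\NFGW{}$, and $K$ for the three independent parts, and divide by $\mathrm{N}'$. The extra points you flag for verification (constancy of $\mathrm{N}'$, independence of the parts, and the $\mathrm{Aut}(G)$ versus stabilizer-of-$\mathcal{W}$ bookkeeping) are legitimate subtleties that the paper's proof passes over silently, but they resolve exactly as you anticipate, so the approach matches.
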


\begin{proof}
We count the number of pairs $(A,B)$, where $A$ is a Steiner triple system and
$B$ is a set of blocks of $A$ isomorphic to $\mathcal{B}'_3$. By definition,
$\mbox{N}'$ is the number of sets of blocks of $A$ isomorphic to
$\mathcal{B}'_3$, so we have to divide the final count by $\mbox{N}'$ to get
the desired result.

The number of ways of choosing the $v-w$ points of $G$ out of the $v$ points is
$\binom{v}{v-w}$. The number of labeled graphs on these points is further
$(v-w)!/|\mbox{Aut}(G)|$. For each such graph we have
\NDG{} solutions for the $\mathcal{D}$ part and
\NFGW{} solutions for the matchings of
the $\mathcal{F}$ part.

As $K$ is the number of ways such a structure can be completed to
\STSS{v}, the theorem follows.
\end{proof}

We still need to elaborate on $K$, the number of ways to complete
the parts. First, we want to find the number of PBDs of the given
type on the point set $W$ with a completion of the blocks of size 2
using points from the (fixed) set $W' = \bigcup_{p \in W}W_p$. This
number can be obtained combinatorially or computationally.

Finally, when forming $\mathcal{F}$ from
a collection of matchings, there are several possibilities in case of
coinciding sets $W_i$. This is seen also in the forming of the collections
$\mathcal{F}''_j$, $0 \leq j \leq w'-1$, from the collections
$\mathcal{F}'_i$, $0 \leq i \leq w-1$. Specifically, letting $P_j$ be
the number of collections $\mathcal{F}'_i$ combined into
$\mathcal{F}''_j$, the desired count is $\prod_{j=0}^{w'-1} P_j!$.
Note, however, that for the smallest instances a collection
$\mathcal{F}''_j$ may contain an empty set; then $P_j!$ should
be replaced by 1.

\paragraph{STS(21)}

The following examples also apply to STSs of other orders than 21.
With $w=5$ and $\mathcal{B}'_3 = \{012,034\}$, we have the three possible
situations depicted in Figure~\ref{fig:five}.
The desired numbers are obtained by dividing 5! with the order of the
automorphism groups of these structures, which are 4, 2, and 1,
respectively. We then get the following values of $K$ for the
three degree sequences:

\[
\begin{array}{l}
1^2 5^{14}: K = (5!/4)\cdot 4! = 720,\\
1^1 3^2 5^{13}: K = (5!/2)\cdot 2! \cdot 2! = 240,\\
3^4 5^{12}: K = (5!/1)\cdot 1 = 120.\\
\end{array}
\]

\noindent
For orders other than 21, we get an exception due to an empty set
in $\mathcal{F}''_j$ in exactly one situation. Namely, for order 7
and the degree sequence $1^2 5^{14}$, we get $K = (5!/4)\cdot 1 = 30$.

\begin{figure}[htbp]
\hfill
\begin{subfigure}[b]{0.3\textwidth}
\begin{tikzpicture}[scale=0.9]
\coordinate (X0) at (0:0);
\coordinate (X2) at (255:4);
\coordinate (X1) at ($(X0)!0.5!(X2)$);
\coordinate (X4) at (285:4);
\coordinate (X3) at ($(X0)!0.5!(X4)$);

\draw[line width=2] (X0)--(X1)--(X2);
\draw[line width=2] (X0)--(X3)--(X4);

\coordinate (Y0) at ([xshift=1.0cm]$(X3)!0.5!(X4)$);
\coordinate (Y1) at ([xshift=2.0cm]$(X3)!0.5!(X4)$);

\draw plot [smooth] coordinates { (X1) (X3) (Y0) };
\draw plot [smooth] coordinates { (X2) (X4) (Y0) };
\draw[dashed] plot [smooth] coordinates { (X1) (X4) (Y1) };
\draw[dashed] plot [smooth] coordinates { (X2) (X3) (Y1) };

\node[draw,fill=white,circle,inner sep=3] at (X0) {0};
\node[draw,fill=white,circle,inner sep=3] at (X1) {1};
\node[draw,fill=white,circle,inner sep=3] at (X2) {2};
\node[draw,fill=white,circle,inner sep=3] at (X3) {3};
\node[draw,fill=white,circle,inner sep=3] at (X4) {4};

\node[draw,fill=white,circle,inner sep=3] at (Y0) {5};
\node[draw,fill=white,circle,inner sep=3] at (Y1) {6};
\end{tikzpicture}
\caption{$1^2 5^{14}$}
\end{subfigure}
\hfill
\begin{subfigure}[b]{0.3\textwidth}
\begin{tikzpicture}[scale=0.9]
\coordinate (X0) at (0:0);
\coordinate (X2) at (255:4);
\coordinate (X1) at ($(X0)!0.5!(X2)$);
\coordinate (X4) at (285:4);
\coordinate (X3) at ($(X0)!0.5!(X4)$);

\draw[line width=2] (X0)--(X1)--(X2);
\draw[line width=2] (X0)--(X3)--(X4);

\coordinate (Y0) at ([shift=({1.5cm,1cm})]$(X3)!0.5!(X4)$);
\coordinate (Y1) at ([shift=({1.5cm,0cm})]$(X3)!0.5!(X4)$);
\coordinate (Y2) at ([shift=({1.5cm,-1cm})]$(X3)!0.5!(X4)$);

\draw plot [smooth] coordinates { (X1) (X3) (Y1) };
\draw plot [smooth] coordinates { (X2) (X4) (Y1) };
\draw[dashed] plot [smooth] coordinates { (X1) (X4) (Y2) };
\draw[dashed] plot [smooth] coordinates { (X2) (X3) (Y0) };

\node[draw,fill=white,circle,inner sep=3] at (X0) {0};
\node[draw,fill=white,circle,inner sep=3] at (X1) {1};
\node[draw,fill=white,circle,inner sep=3] at (X2) {2};
\node[draw,fill=white,circle,inner sep=3] at (X3) {3};
\node[draw,fill=white,circle,inner sep=3] at (X4) {4};

\node[draw,fill=white,circle,inner sep=3] at (Y0) {5};
\node[draw,fill=white,circle,inner sep=3] at (Y1) {6};
\node[draw,fill=white,circle,inner sep=3] at (Y2) {7};
\end{tikzpicture}
\caption{$1^1 3^2 5^{13}$}
\end{subfigure}
\hfill
\begin{subfigure}[b]{0.3\textwidth}
\begin{tikzpicture}[scale=0.9]
\coordinate (X0) at (0:0);
\coordinate (X2) at (255:4);
\coordinate (X1) at ($(X0)!0.5!(X2)$);
\coordinate (X4) at (285:4);
\coordinate (X3) at ($(X0)!0.5!(X4)$);

\draw[line width=2] (X0)--(X1)--(X2);
\draw[line width=2] (X0)--(X3)--(X4);

\coordinate (Y0) at ([shift=({1.5cm,2.0cm})]$(X3)!0.5!(X4)$);
\coordinate (Y1) at ([shift=({1.5cm,1.0cm})]$(X3)!0.5!(X4)$);
\coordinate (Y2) at ([shift=({1.5cm,-0.0cm})]$(X3)!0.5!(X4)$);
\coordinate (Y3) at ([shift=({1.5cm,-1.0cm})]$(X3)!0.5!(X4)$);

\draw plot [smooth] coordinates { (X1) (X3) (Y1) };
\draw plot [smooth] coordinates { (X2) (X4) (Y2) };
\draw[dashed] plot [smooth] coordinates { (X1) (X4) (Y3) };
\draw[dashed] plot [smooth] coordinates { (X2) (X3) (Y0) };

\node[draw,fill=white,circle,inner sep=3] at (X0) {0};
\node[draw,fill=white,circle,inner sep=3] at (X1) {1};
\node[draw,fill=white,circle,inner sep=3] at (X2) {2};
\node[draw,fill=white,circle,inner sep=3] at (X3) {3};
\node[draw,fill=white,circle,inner sep=3] at (X4) {4};

\node[draw,fill=white,circle,inner sep=3] at (Y0) {5};
\node[draw,fill=white,circle,inner sep=3] at (Y1) {6};
\node[draw,fill=white,circle,inner sep=3] at (Y2) {7};
\node[draw,fill=white,circle,inner sep=3] at (Y3) {8};
\end{tikzpicture}
\caption{$3^4 5^{12}$}
\end{subfigure}
\hfill
\caption{Connecting PBDs to additional points}\label{fig:five}
\end{figure}
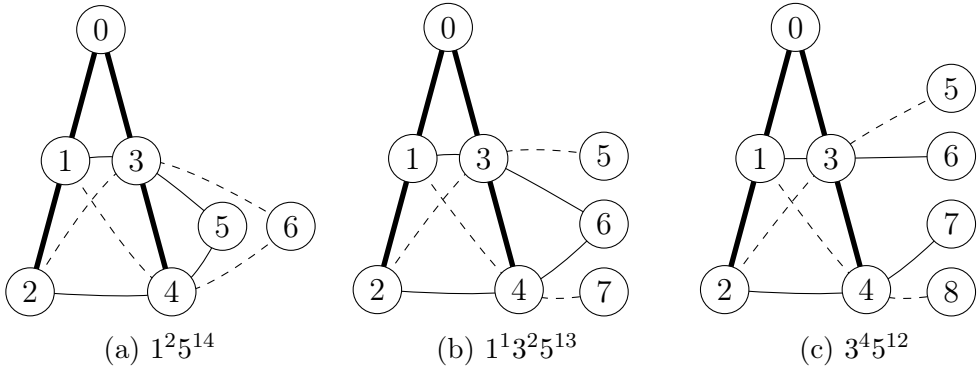

\subsection{Counting Isomorphism Classes}\label{sect:iso}

Denote by $N_{v,i}$ the number of isomorphism classes of \STSS{v}
with an automorphism group of order $i$. By the Orbit--Stabilizer
theorem, the number of labeled Steiner triple systems is
\begin{equation}\label{eq:sum}
v!\sum_i \frac{N_{v,i}}{i}.
\end{equation}
If, for some fixed $v$, we know the number of labeled systems
and $N_{v,i}$ for all $i \geq 2$,
we arrive at an equation where only $N_{v,1}$ is unknown and
can be determined. An error has occurred if the result is not
an integer. The total number of isomorphism classes
is obviously $\sum_i N_{v,i}$.

The sum~\eqref{eq:sum} contains the term $v!N_{v,1}$. Assuming
correctness of the classification of the \STSS{v} with nontrivial
automorphisms, this means that
an incorrect number of labeled Steiner triple systems will be accepted
only if it differs from the correct number by a multiple of
$v!$.

Assume that the value of
$\NDG{}\NFGW{}$
in~\eqref{eq:total} is incorrect, and denote the difference between the correct
and the incorrect value by $E$. For such an error to go undetected
$Kv!E/(w!|\mathrm{Aut}(G)|\mathrm{N}')$ should be divisible by $v!$,
that is, $KE/(w!|\mathrm{Aut}(G)|\mathrm{N}')$ should be an
integer. In the common situation of $K = w!$ and $|\mathrm{Aut}(G)| = 1$,
we get that $E$ should be divisible by $\mathrm{N}'$. Also in other
cases---including missing graphs, run-time errors, and multiple counting
errors---divisibility of some integer by $\mathrm{N}'$
is a core question and gives an impression of the probability of
errors going undetected.

\paragraph{STS(21)}

A classification of \STSS{21} with nontrivial automorphisms
was carried out by Kaski~\cite{K} and validated by double
counting. The distribution of such systems based on the order
of the automorphism groups is displayed in
Table~\ref{tbl:nontrivial}.

\begin{table}[htbp]
\caption{The \STS{21} with nontrivial automorphisms}\label{tbl:nontrivial}
\begin{center}
\begin{tabular}{rr@{\hskip 30pt}rr@{\hskip 30pt}rr}
\hline
$|\mathrm{Aut}(X)|$ & N & $|\mathrm{Aut}(X)|$ & N & $|\mathrm{Aut}(X)|$ & N \\
\hline
2    &    60{,}588{,}267  & 14   &    14 & 54   &    1  \\
3    &    1{,}732{,}131  & 16   &    12 & 72   &    5  \\
4    &    11{,}467  & 18   &    33 & 108  &    1  \\
5    &    1{,}772  & 21   &    10 & 126  &    2  \\
6    &    2{,}379  & 24   &    19 & 144  &    1  \\
7    &    66  & 27   &    3  & 294  &    1  \\
8    &    222  & 36   &    5  & 504  &    1  \\
9    &    109  & 42   &    7  & 882  &    1  \\
12   &    85  & 48   &    2  & 1008 &    1  \\
\hline
\end{tabular}
\end{center}
\end{table}

As argued above,
the results produced by the software used in the computations,
including \texttt{geng} and our own programs, are subject to the
overall validation. For the divisibility argument discussed, we
can be somewhat more specific in the case of the current
parameters: \STSS{21}, $w=5$, and $\mathcal{B}'_3 = \{012,034\}$.

For the degree sequences $1^2 5^{14}$, $1^1 3^2 5^{13}$, and
$3^4 5^{12}$ and respective errors $E_1$, $E_2$, and $E_3$,
the impact of these errors on the total sum is
\[
\frac{6\cdot 21!E_1}{945|\mbox{Aut}(G)|},\
\frac{2\cdot 21!E_2}{945|\mbox{Aut}(G)|}, \mbox{\ and \ }
\frac{21!E_3}{945|\mbox{Aut}(G)|}.
\]
As $|\mbox{Aut}(G)|=1$ for the vast majority of graphs, we
are essentially asking whether $6E_1+2E_2+E_3$ is divisible
by 945. With a uniform distribution of that sum over all possible
values mod 945, this probability would obviously be $1/945$.

The graphs with degree sequence $5^{14}$ are regular graphs and
their number can be found in the literature~\cite{M,R}.

\section{The Main Result}\label{sect:results}

The main computations were carried out in a cluster, and the computing times
announced here are for computations on the equivalent of one core of a
2.4-GHz Intel Xeon E5-2665. The total computing time was approximately 82
core-years. The classification of the \STSS{21} with nontrivial automorphisms
in~\cite{K} took 25 core-hours with a 2-GHz CPU.

Due to the very large number of graphs with the given degree sequences, these
were not saved, but they can at any point be reproduced with \texttt{geng}.
The number of isomorphism classes of graphs with degree sequences in column~$S_2$
in Table~\ref{tbl:geng}
are shown in Table~\ref{tbl:nonregular}
and grouped based on the order of the automorphism group and the degree sequence.
Recall that, as described in Section~\ref{sect:degree}, any graph with degree sequence $S_2$ can be extended
in a unique way to a graph with the corresponding degree sequence
$S_1$ in Table~\ref{tbl:geng} (by adding vertices of degree one).
Moreover, the order of the automorphism group remains unchanged in all but one
of the extensions: it doubles when going from $5^{14}$ to $1^2 5^{14}$.

The last row in Table~\ref{tbl:nonregular} shows the amount of core-hours
for all computations in the respective cases, including the
generation of graphs and the computation of \NDG{} and
\NFGW{}.

\begin{table}[htbp]
\caption{Graphs with certain degree sequences}\label{tbl:nonregular}
\begin{center}
\footnotesize
\begin{tabular}{rrrrrr}
\hline
|\mbox{Aut}(G)| & $5^{14}$ & $4^2 5^{12}$ & $2^1 3^1 5^{13}$ & $3^2 4^1 5^{12}$ & $3^4 5^{12}$\\\hline
1 & 3{,}063{,}687 & 143{,}619{,}210 & 714{,}665{,}364 & 9{,}352{,}704{,}447 & 50{,}268{,}563{,}872 \\
2 & 344{,}187 & 11{,}440{,}917 & 52{,}316{,}559 & 565{,}093{,}004 & 3{,}265{,}605{,}905 \\
3 &   & 47 & 24 & 50 & 1{,}507 \\
4 & 42{,}047 & 959{,}414 & 3{,}865{,}385 & 33{,}876{,}404 & 185{,}390{,}630 \\
6 & 461 & 19{,}402 & 103{,}883 & 952{,}540 & 5{,}479{,}846 \\
7 & 1 &   &   &   &   \\
8 & 6{,}713 & 89{,}361 & 278{,}358 & 2{,}045{,}349 & 11{,}122{,}205 \\
12 & 456 & 9{,}664 & 42{,}587 & 290{,}483 & 1{,}373{,}479 \\
14 & 5 &   &   &   &   \\
16 & 1{,}103 & 8{,}785 & 19{,}688 & 130{,}440 & 722{,}715 \\
18 &   &   &   &   & 53 \\
24 & 251 & 3{,}088 & 9{,}996 & 56{,}321 & 265{,}512 \\
28 & 4 &   &   &   &   \\
32 & 209 & 993 & 1{,}411 & 9{,}181 & 53{,}049 \\
36 & 4 & 89 & 359 & 1{,}785 & 5{,}509 \\
48 & 114 & 767 & 1{,}788 & 9{,}455 & 46{,}312 \\
60 & 2 &   &   &   & 1 \\
64 & 37 & 123 & 113 & 740 & 4{,}504 \\
72 & 9 & 98 & 297 & 1{,}082 & 3{,}929 \\
80 & 3 &   &   &   &   \\
96 & 30 & 161 & 318 & 1{,}590 & 7{,}661 \\
108 &   &   &   &   & 1 \\
120 &   &   &   & 1 & 3 \\
128 & 14 & 24 & 6 & 75 & 526 \\
144 & 11 & 60 & 101 & 359 & 1{,}576 \\
160 &   &   &   &   & 1 \\
192 & 10 & 35 & 69 & 281 & 1{,}420 \\
216 &   & 3 & 3 & 6 & 41 \\
240 &   &   &   & 1 & 5 \\
256 & 5 & 8 &   & 5 & 74 \\
288 & 7 & 20 & 35 & 116 & 489 \\
320 &   &   &   &   & 1 \\
384 & 2 & 8 & 7 & 40 & 248 \\
432 & 1 & 4 & 2 & 7 & 54 \\
480 &   &   &   & 1 & 7 \\
512 & 1 & 1 &   & 1 & 17 \\
576 & 2 & 11 & 10 & 35 & 160 \\
720 &   &   & 8 & 94 & 434 \\
768 & 2 & 3 & 2 & 4 & 37 \\
864 &   & 2 & 1 &   & 26 \\
960 &   &   &   &   & 1 \\
1024 &   &   &   & 1 & 4 \\
\hline
&&&&&(cont.)
\end{tabular}
\end{center}
\end{table}

\addtocounter{table}{-1}

\begin{table}[htbp]
\caption{Graphs with certain degree sequences (cont.)}\label{tbl:nonregular2}
\begin{center}
\footnotesize
\begin{tabular}{rrrrrr}
\hline
|\mbox{Aut}(G)| & $5^{14}$ & $4^2 5^{12}$ & $2^1 3^1 5^{13}$ & $3^2 4^1 5^{12}$ & $3^4 5^{12}$\\\hline
1152 & 1 & 1 & 1 & 8 & 53 \\
1296 &   &   &   &   & 1 \\
1440 &   & 3 & 13 & 86 & 283 \\
1536 &   &   & 1 &   & 11 \\
1728 & 1 & 1 &   & 1 & 17 \\
1792 & 1 &   &   &   &   \\
1920 &   &   &   &   & 1 \\
2048 &   &   &   &   & 1 \\
2160 &   &   &   &   & 2 \\
2304 & 1 &   & 1 & 3 & 13 \\
2592 &   &   &   &   & 1 \\
2880 & 1 & 5 & 12 & 40 & 133 \\
3072 &   &   &   &   & 3 \\
3456 &   &   &   &   & 3 \\
3840 &   &   &   &   & 1 \\
4096 &   &   &   &   & 1 \\
4320 &   &   &   & 1 & 6 \\
4608 &   &   &   &   & 5 \\
5120 &   &   &   &   & 1 \\
5760 &   & 2 & 2 & 9 & 36 \\
6144 &   & 1 &   &   &   \\
6912 &   &   &   &   & 4 \\
8640 &   & 2 & 3 & 5 & 6 \\
9216 &   &   &   &   & 2 \\
10368 &   &   &   &   & 4 \\
11520 & 1 &   & 1 & 3 & 10 \\
13824 &   &   &   &   & 1 \\
17280 &   &   &   & 1 & 4 \\
18432 &   &   &   &   & 1 \\
23040 &   & 1 &   &   & 4 \\
25920 &   &   &   &   & 3 \\
34560 &   &   &   &   & 4 \\
43200 & 1 &   &   &   &   \\
46080 &   &   &   &   & 1 \\
51840 &   & 1 &   &   & 1 \\
55296 &   &   &   &   & 1 \\
69120 &   &   &   &   & 1 \\
92160 & 1 &   &   &   &   \\
207360 &   &   &   &   & 1 \\
460800 &   &   &   &   & 1 \\
24883200 &   &   &   &   & 1 \\
\hline
Total & 3{,}459{,}386 & 156{,}152{,}315 & 771{,}306{,}408 & 9{,}955{,}174{,}055 & 53{,}738{,}652{,}436 \\
Core-h & 80 & 1{,}799 & 12{,}745 & 135{,}073 & 572{,}276 \\
\hline
\end{tabular}
\end{center}
\end{table}

In Table~\ref{tbl:countspergraph}, we split the value of~\eqref{eq:total}
into partial sums, one for each degree
sequence. (Notice that we in general cannot assume that such
partial sums are integers.)

\begin{table}[htbp]
\caption{Partial sums of~\eqref{eq:total} per degree sequence}\label{tbl:countspergraph}
\begin{center}
\begin{tabular}{llr}
\hline
$S_1$ & $S_2$ &  Partial sum\\
\hline
$1^2 5^{14}$ & $5^{14}$ &  133{,}088{,}588{,}244{,}979{,}214{,}201{,}168{,}855{,}040{,}000 \\
$1^2 5^{14}$ & $4^2 5^{12}$&2{,}538{,}696{,}865{,}871{,}668{,}928{,}235{,}196{,}907{,}520{,}000 \\
$1^1 3^2 5^{13}$& $2^1 3^1 5^{13}$ &10{,}154{,}787{,}463{,}486{,}675{,}712{,}940{,}787{,}630{,}080{,}000 \\
$1^1 3^2 5^{13}$& $3^2 4^1 5^{12}$ &77{,}792{,}298{,}507{,}007{,}219{,}219{,}438{,}529{,}150{,}976{,}000 \\
$3^4 5^{12}$   & $3^4 5^{12}$ &    665{,}333{,}309{,}624{,}296{,}811{,}889{,}899{,}926{,}978{,}560{,}000 \\
\hline
Total && 755{,}952{,}181{,}048{,}907{,}354{,}964{,}715{,}609{,}522{,}176{,}000 \\
\hline
\end{tabular}
\end{center}
\end{table}

By Table~\ref{tbl:countspergraph}, the total number of labeled \STS{21} is
\begin{align*}
755{,}952{,}181{,}048{,}907{,}354{,}964{,}715{,}609{,}522{,}176{,}000.
\end{align*}
By~\eqref{eq:sum} and Table~\ref{tbl:nontrivial}, we then get that
the number of isomorphism classes of \STSS{21} with trivial automorphism
group is
\begin{align*}
N_{21,1} = 14{,}796{,}207{,}455{,}537{,}154.
\end{align*}
Finally, once again utilizing Table~\ref{tbl:nontrivial},
this yields the total number of isomorphism classes of \STSS{21},
\begin{align*}
14{,}796{,}207{,}517{,}873{,}771,
\end{align*}
which completes a computer-aided proof of Theorem~\ref{thm:main}.

Using a random hypergraph model, it is conjectured in~\cite{HO} that the proportion of \STSS{v} containing at least one \STS*{7} is $\alpha = 1-e^{-1/168} \approx 0.00593$ for large $v$. By~\cite{HO}, the number of isomorphism classes of \STSS{21} that contain at least one \STS*{7} is $116{,}635{,}963{,}205{,}551$.
The real proportion for \STSS{21} is hence
\begin{align*}
\frac{
116{,}635{,}963{,}205{,}551
}{
14{,}796{,}207{,}517{,}873{,}771
}
\approx
0.00788
\approx
1.3
\alpha,
\end{align*}
which approximately coincides with the real proportion of $1.3\alpha$ in the case of \STSS{19} and is in line with the experimental results
of~\cite{HO2}.

The approach was also applied with the same main parameters
($w=5$ and $\mathcal{B}'_3 = \{012,034\}$) to the cases of
\STSS{13}, \STSS{15}, and \STSS{19}. The
number of labeled \STSS{19} was obtained in about 60 core-hours.
We did not investigate whether $w=5$ is the optimal choice for \STS{19}.

\section*{Acknowledgements}

The authors thank the referees for valuable comments.

\end{document}